\newtheorem{thm}{Theorem}[section]
\newtheorem{prop}[thm]{Proposition}
\newtheorem{cor}[thm]{Corollary}
\newtheorem{lemma}[thm]{Lemma}
\theoremstyle{definition}
\newtheorem{dfn}[thm]{Definition}
\theoremstyle{remark}
\newtheorem{rem}[thm]{Remark}
\numberwithin{equation}{section}
\newcommand{\grad}{\ensuremath{\mathrm{grad}\ }}
\newcommand{\tub}{\ensuremath{\mathrm{Tub} }}
\newcommand{\F}{\ensuremath{\mathcal{F}}}
\newcommand{\singularF}{\ensuremath{\mathcal{X}_{F}}}
\newcommand{\rank}{\ensuremath{\mathrm{rank}\ }}
\newcommand{\id}{\ensuremath{\mathrm{id}}}
\newcommand{\RR}{\mathbb R}
\begin{document}

\title{Equifocality of a singular riemannian foliation}

%\author{Marcos M. Alexandrino  \and Dirk T\"oben} 
 
\author{Marcos M. Alexandrino}

\author{Dirk T\"{o}ben}
 
%\thanks{ }

\address{Marcos M. Alexandrino\\Instituto de Matem\'{a}tica e Estat\'{\i}stica\\
Universidade de S\~{a}o Paulo, Rua do Mat\~{a}o 1010,05508 090 S\~{a}o Paulo, Brazil}
\email{marcosmalex@yahoo.de}
%\email{malex@ime.usp.br}

\address{Dirk T\"oben\\
Mathematisches Institut, Universit\"at zu K\"oln, Weyertal 86-90, 50931 K\"oln, Germany}
\email{dtoeben@math.uni-koeln.de}

\thanks{The first author was  supported by CNPq and partially supported by FAPESP}

\subjclass[2000]{Primary 53C12, Secondary 57R30}

\date{24 th May, 2007.}

\keywords{Singular riemannian foliations, equifocal submanifolds, isometric actions}
\begin{abstract}
A singular foliation on a complete riemannian manifold $M$ is said to be riemannian if each geodesic that is perpendicular at one point to a leaf remains perpendicular to every leaf it meets. We prove that the regular leaves are equifocal, i.e., the end point map of a normal foliated vector field has constant rank. This implies that we can reconstruct the singular foliation by taking all parallel submanifolds of a regular leaf with trivial holonomy. In addition,  the end point map of a normal foliated vector field on a leaf with trivial holonomy is a covering map.
These results generalize previous results of the authors on singular riemannian foliations with sections.
\end{abstract}
\maketitle

\section{Introduction}

In this section, we will recall some definitions and state our main results as Theorem \ref{thm-s.r.f.-equifocal} and Corollary \ref{cor}.

We start by recalling the definition of a singular riemannian foliation (see  the  book of   Molino \cite{Molino}).
\begin{dfn}[s.r.f]
 A partition $\F$ of a complete riemannian manifold $M$ by connected immersed submanifolds (the \emph{leaves}) is called a {\it singular riemannian foliation} (s.r.f. for short) if it verifies condition (1) and (2):

\begin{enumerate}
\item $\F$ is a {\it singular foliation},
i.e., the module $\singularF$ of smooth vector fields on $M$ that are tangent at each point to the corresponding leaf acts transitively on each leaf. In other words, for each leaf $L$ and each $v\in TL$ with footpoint $p,$ there is $X\in \singularF$ with $X(p)=v$.
\item  $\F$ \emph{transnormal}, i.e., every geodesic that is perpendicular at one point to a leaf remains perpendicular to every leaf it meets.
\end{enumerate}
\end{dfn}

Let $\F$ be a singular riemannian foliation on a complete riemannian manifold $M.$  A leaf $L$ of $\F$ (and each point in $L$) is called \emph{regular} if the dimension of $L$ is maximal, otherwise $L$ is called {\it singular}. 

Typical examples of s.r.f. are the partition by orbits of an isometric action, by leaf closures of a Riemannian foliation, examples constructed by suspension of homomorphisms (see  \cite{Alex2,Alex4}) and examples constructed by changes of metric and surgery (see \cite{AlexToeben}).

A particular class of s.r.f. are the one which admits sections, i.e., for each regular point $p$ the set $\Sigma:=\exp(\nu_{p}L_{p})$ is a complete immersed submanifold that meets each leaf orthogonally.

The concept of singular riemannian foliations with sections (s.r.f.s. for short) was introduced in \cite{Alex2} and continued to be studied by the authors  in \cite{Alex1,Alex3,Alex4,Toeben,Toeben2,AlexToeben}, by Lytchak and Thorbergsson in \cite{LytchakThorbergsson} and recently by Gorodski and the first author  in \cite{AlexGorodski}. 
In \cite{Boualem} Boualem dealt with a 
singular riemannian foliation $\F$ on a complete manifold $M$ 
such that the distribution of normal spaces of the regular leaves  
is integrable. It was proved in \cite{Alex4} that such an 
$\F$ must be a s.r.f.s. 

S.r.f.s. include the partitions by orbits of a polar action and the well known class of isoparametric foliations on space forms, some of them with inhomogeneous leaves. 

In \cite{TTh1}, Terng and Thorbergsson introduced the concept of 
equifocal submanifolds with flat sections in compact symmetric spaces in 
order to generalize the definition of isoparametric submanifolds 
in euclidean space.

A connected immersed submanifold $L$ of a complete riemannian manifold 
$M$ is called \emph{equifocal} if it satisfies the following 
conditions:
\begin{enumerate}
\item The normal bundle $\nu(L)$ is flat.
\item For each parallel normal field $\xi$ on a neighborhood $U \subset L$,  
the derivative of the map $\eta_{\xi}:U\to M$ defined 
by $\eta_{\xi}(x):=\exp_{x}(\xi)$ has constant rank.
\item $L$ has sections, i.e.~for each~$p\in L$, 
the set $\Sigma :=\exp_{p}(\nu_p L_{p})$, called section,  is a complete immersed totally 
geodesic submanifold. 
\end{enumerate}

There is almost an equivalence between the notions of a s.r.f.s. and equifocal submanifolds that is worked out in the authors' works  \cite{Alex2} and \cite{Toeben}.  

On the one hand it was proved that a closed embedded  equifocal submanifold induces a s.r.f.s. by taking all its parallel submanifolds (\cite{Toeben},  \cite{Alex3}) if and only if there is exactly one section through every regular value of the normal exponential map of the equifocal submanifold. The global structure inherent to a s.r.f.s. was then used to generalize some results known for isoparametric submanifolds in euclidean space.

On the other hand, it was proved in  \cite{Alex2} that the leaves of a s.r.f.s. are equifocal (see \cite{Toeben} for an alternative proof). In converse direction to above the equifocality of a s.r.f.s. is also a very important tool in the theory of s.r.f.s. For example,  it allows us to have a Slice Theorem, singular holonomy, Weyl pseudogroups, a relation of s.r.f.s. to transnormal maps and an extension of Weyl-invariant forms to basic forms.

 While the existence of sections has interesting structural implications it naturally restricts the number of cases that are covered. This can be best seen in the case of homogenous s.r.f., when comparing an arbitrary isometric actions with a polar action. The latter is best exemplified by the action of a compact Lie group on itself by conjugation.
In this paper we want to drop the condition on the existence of sections and prove that regular leaves of a s.r.f. are also equifocal.  In order to make this statement precise, we will drop the first and third condition in the definition of equifocal submanifold and  we will also need to change the concept of parallel normal fields to foliated vector fields. Note that the restriction $\F_r$ of $\F$ to the regular stratum of $M$ is a regular foliation. We recall that a vector field $\xi$ in the normal bundle of the foliation over an open subset $U$ in the regular stratum is called \emph{foliated}, if for each vector field $Y\in\singularF$ the Lie bracket $[\xi,Y]$ also belongs to $\singularF$. If we consider a local submersion $\pi$ which describes the plaques of $\F$ in a neighboorhood of a point of $L$, then a normal foliated vector field is a normal projectable/basic vector field with respect to $\pi.$
\begin{rem}\label{rem-foliated-vector-field}
A Bott or basic connection $\nabla$ of a foliation $\F$ is a connection of $TM$ with $\nabla_XY=[X,Y]^{\nu\F}$ whenever $X\in \singularF$ and $Y$ is vector field of the normal bundle $\nu\F$ of the foliation. Here the superscript $\nu\F$ denotes projection onto $\nu\F$. A foliated vector field clearly is parallel with respect to the Bott connection. This connection can be restricted to the normal bundle of a leaf.
\end{rem}

\begin{dfn}
\label{dfn-foliated-vector-field}
Let $L$ be a regular leaf of a s.r.f. A normal vector field along $L$ is said to be \emph{foliated}, if it is Bott-parallel, or in other words, if it is locally the restriction of a foliated vector field of $\F_r$ to a neighborhood $U\subset L$.
\end{dfn}

\begin{rem}
Note that if the s.r.f. admits sections then a normal foliated vector field is a parallel normal field along each regular leaf $L$ with respect to the induced Levi-Civita connection on $\nu L$ and vice versa. In other words in the case of sections the induced Levi-Civita connection is a Bott-connection.  
\end{rem}

We are finally ready to state our result precisely.

\begin{thm}
\label{thm-s.r.f.-equifocal}
Let $\F$ be a s.r.f. on a complete riemannian manifold $M$. Then for each regular point $p$ there exists a neigborhood $U$ of $p$ in $L_{p}$ such that
\begin{enumerate}
\item[(1)] For each   normal foliated vector field $\xi$  along $U$  the derivative of  the map    $\eta_{\xi}:U\rightarrow M,$ defined as $\eta_{\xi}(x):=\exp_{x}(\xi),$ has constant rank.
\item[(2)] $W:=\eta_{\xi}(U)$ is an open set of $L_{\eta_{\xi}(p)}$.
\end{enumerate}
\end{thm}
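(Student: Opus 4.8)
The plan is to study the whole family $\eta_{t\xi}$, $t\in[0,1]$, at once and let $t$ run from $0$ to $1$. Since $[t\xi,Y]=t[\xi,Y]\in\singularF$ for every $Y\in\singularF$, each $t\xi$ is again a normal foliated vector field, so it suffices to control $\eta_{t\xi}$ for all $t$ simultaneously. Fixing a curve $c$ in $U$ with $c(0)=x$ and $c'(0)=v\in T_xL_p$, the derivative $d(\eta_{t\xi})_x(v)$ is the value at time $t$ of the Jacobi field $J_v$ along $\gamma_x(t):=\exp_x(t\xi(x))$ coming from the geodesic variation $(s,t)\mapsto\exp_{c(s)}(t\,\xi(c(s)))$. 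By transnormality every $\gamma_x$ starts perpendicular to $L_p$ and hence stays perpendicular to all leaves it meets. I would therefore split the theorem into two assertions to be propagated along $\gamma_p$: (a) $\eta_{t\xi}(U)$ is contained in a single leaf (which yields (2)), and (b) $\rank d(\eta_{t\xi})_x$ is independent of $x$ (which yields (1)).

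First I would treat the portions of $\gamma_p$ lying in the regular stratum. There $\F_r$ is an ordinary riemannian foliation, so on a suitable neighborhood $V$ it is given by the fibers of a riemannian submersion $\pi\colon V\to B$, and a normal foliated field is exactly a basic/projectable field; hence $\xi$ descends to a single vector field $\bar\xi$ on $B$. Because the $\gamma_x$ are horizontal and $\pi$ is a riemannian submersion, they all project to the one geodesic $\bar\gamma(t)=\exp^{B}_{\pi(p)}(t\,\bar\xi(\pi(p)))$, which does not depend on $x\in U$. Consequently $\eta_{t\xi}$ carries the plaque through $p$ into the single fiber $\pi^{-1}(\bar\gamma(t))$, and on that plaque it coincides with the holonomy diffeomorphism of $\pi$ along $\bar\gamma|_{[0,t]}$. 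This already gives the local form of (a) and (b) with full rank $\dim L_p$ as long as $\gamma_p$ stays regular.

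The substantive work is to push these statements across the singular leaves that $\gamma_p$ may meet, where the quotient $B$ and the submersion model break down and the rank may genuinely drop (for instance when $\eta_\xi(p)$ lies on a singular leaf, so that the constant rank equals $\dim L_{\eta_\xi(p)}<\dim L_p$). Here I would cover $\gamma_p([0,1])$ by finitely many neighborhoods, using a tubular neighborhood and the local slice / infinitesimal-foliation structure of the s.r.f.\ around each singular leaf it meets, and compose the resulting plaque-to-plaque maps. For the rank I would combine two facts: the leaves are homogeneous under the transitive action of $\singularF$, and, because $\xi$ is foliated, the flows $\varphi^X_s$ of fields $X\in\singularF$ are intertwined by $\eta_\xi$ with leaf-preserving flows on the target leaf, up to terms tangent to the leaves (this is where $\mathcal L_X\xi=[X,\xi]\in\singularF$ enters). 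Differentiating such an intertwining shows that $\rank d(\eta_\xi)_x$ agrees at any two points of $U$ joined by such flows, and transitivity then gives constant rank; openness of $W$ in $L_{\eta_\xi(p)}$ follows since a constant-rank map is locally a submersion onto its image and that image lies in one leaf.

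I expect the main obstacle to be exactly this passage through the singular stratum: away from it everything is governed by the rigid holonomy of a riemannian submersion, but at a singular leaf one must verify that the endpoint map extends continuously and that the drop in rank is the same for all starting points in $U$. The cleanest way to control this is to run the argument on the whole family $\eta_{t\xi}$ and exploit that $\rank d(\eta_{t\xi})_x$ is lower semicontinuous and locally constant in $(x,t)$ off the singular times, so that its value just after a singular time is forced by its value just before, uniformly in $x$; the foliated hypothesis is precisely what guarantees this uniformity.
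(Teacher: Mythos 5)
Your treatment of the regular stratum (submersion model, basic fields, holonomy diffeomorphisms, the Jacobi-field interpretation of $d(\eta_{t\xi})_x$) matches the paper's Lemma \ref{lemma-jacobi-field-itens} and items 2)--4) of Proposition \ref{prop-s.r.f-equifocal-tubular}, and you correctly identify the crossing of a singular leaf as the crux. But precisely there your proposal has a genuine gap, and the two mechanisms you offer for it do not work. First, the semicontinuity argument: $\rank d(\eta_{t\xi})_x$ is only \emph{lower} semicontinuous, which bounds the rank near a singular time from below by the smaller rank \emph{at} that time and says nothing in the other direction; in the model example of concentric spheres in $\RR^n$ the rank is $n-1$ before the center-hitting time, $0$ at it, and $n-1$ after, so the value just before a singular time does not ``force'' the value just after by any continuity principle --- that implication \emph{is} the theorem, and in particular the constant-rank and openness statements at the singular time itself (rank $=\dim L_{\eta_\xi(p)}<\dim L_p$, image open in the singular leaf) are not addressed by your scheme at all. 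Second, the intertwining of flows of $\singularF$ with leaf-preserving flows on the target ``up to terms tangent to the leaves'' is essentially an equivariance property available in the homogeneous case; for a general s.r.f.\ no such flow on a neighborhood of the singular leaf exists, and ``up to leaf-tangent terms'' is too weak to transport ranks unless one already knows the variation fields stay tangent to the leaves --- which is exactly what has to be proven.

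What the paper supplies at this point, and your proposal lacks, is a geometric reflection mechanism through the singular leaf. After two metric modifications --- Proposition \ref{lemma-almost-product} (a local product metric preserving the transverse distances and the horizontal geodesics) and Proposition \ref{lemma-metric-in-S} (a homothetic blow-down making $\exp_{\tilde q}$ on the slice an isometry) --- the paper proves that the slice foliation $\F\cap S_q$ is invariant under the geodesic symmetry of the slice (Proposition \ref{lemma-slice-fundamental}, via the induced s.r.f.\ on the round sphere, Lemma \ref{lemma:indSRF}). This yields two things your outline needs but does not establish: the isolation of singular points along a horizontal geodesic (Corollary \ref{singular-points-isolated}), which your finite covering step silently assumes, and the existence of a local totally geodesic ``section'' $\sigma$ through the singular point to which the plaques $P_{\gamma(t)}$ stay orthogonal on \emph{both} sides of the singular time. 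Since $R(\gamma',\cdot)\gamma'$ then preserves the parallel splitting $T\sigma\oplus(T\sigma)^{\perp}$ along $\gamma$, the Jacobi field $J$ with initial conditions in $(T\sigma)^{\perp}$ remains in $(T_{\gamma(t)}\sigma)^{\perp}$ and is therefore tangent to every regular plaque it meets, which is what propagates constant rank and openness across the singular time. Without a substitute for this reflection/section argument your plan does not go through.
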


\begin{cor}
\label{cor}
Let $L_{p}$ be a regular  leaf with trivial holonomy and $\Xi$ denote the set of all  normal foliated vector  fields along $L_{p}.$ 
\begin{enumerate}
\item[(1)] Let $\xi\in \Xi$. Then $\eta_{\xi}:L_{p}\rightarrow L_{q}$ is a covering map if $q=\eta_{\xi}(p)$ is a regular point.
\item[(2)]  $\F=\{\eta_{\xi}(L_{p})\}_{\xi\in \, \Xi},$ i.e., we can reconstruct the singular foliation by taking all parallel submanifolds of the regular leaf $L_{p}.$ 
\end{enumerate}
\end{cor}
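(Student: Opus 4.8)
The plan is to deduce both statements from Theorem \ref{thm-s.r.f.-equifocal} together with the completeness of $M$ and the riemannian character of $\F$. For part (1), I would first show that $\eta_\xi$ maps all of $L_p$ into the single leaf $L_q$ and does so as a local diffeomorphism. Since $q=\eta_\xi(p)$ is regular, part (2) of the theorem says $\eta_\xi$ carries a neighbourhood of $p$ onto an open subset of $L_q$; as $\dim L_q=\dim L_p$ for two regular leaves, the constant rank of part (1) must equal $\dim L_p$, so $\eta_\xi$ is a local diffeomorphism near $p$. To globalise, set $A=\{x\in L_p:\eta_\xi(x)\in L_q\}$. Applying part (2) at each point shows that $x\mapsto\dim L_{\eta_\xi(x)}$ is locally constant, hence constant $=\dim L_q$ along the connected leaf $L_p$; thus every $\eta_\xi(x)$ is regular and $\eta_\xi$ has constant rank $\dim L_p$ throughout. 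Then $A$ is open by part (2) and closed because two leaves that meet coincide, so $A=L_p$ and $\eta_\xi\colon L_p\to L_q$ is a local diffeomorphism.

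To upgrade this to a covering map I would exploit that a normal foliated field has constant norm along $L_p$: since $\F$ is riemannian, leaf holonomy acts by linear isometries on the normal space and a Bott--parallel field is holonomy--invariant, so $|\xi|$ is constant. Reversing the horizontal geodesics $t\mapsto\exp_x(t\xi(x))$ produces a normal field $\xi'$ along $\eta_\xi(L_p)\subset L_q$ with $\eta_{\xi'}\circ\eta_\xi=\id$ locally and $|\xi'|=|\xi|$, and one checks $\xi'$ is again foliated. I would then establish the path--lifting property: a curve $\beta$ in $L_q$ issuing from $q$ is lifted using the local inverses of $\eta_\xi$; at the supremal lifting time the reverse vectors have constant norm, hence subconverge in $\nu L_q$, and completeness of $M$ makes $\exp$ continuous, so the lift extends. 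A surjective local diffeomorphism with the path--lifting property onto a connected manifold is a covering map.

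For the reconstruction in part (2) I would prove both inclusions. Each $\eta_\xi(L_p)$ lies in a single leaf by the connectedness argument above; if the target is regular it is the whole leaf by part (1), and if it is singular the same openness--plus--completeness reasoning shows the image is open and closed in $L_{\eta_\xi(p)}$, hence equal to it. Conversely, to realise an arbitrary leaf $L'$, I would pick $q'\in L'$ and a minimising geodesic from $q'$ to $L_p$ (its existence uses completeness of $M$). This geodesic meets $L_p$ perpendicularly at a foot point $x_0$, so its initial vector $v$ lies in $\nu_{x_0}L_p$; extending $v$ to a foliated field $\xi\in\Xi$ via the trivial holonomy of $L_p$ gives $\eta_\xi(x_0)=q'$, whence $\eta_\xi(L_p)=L'$.

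The hard part will be that regular leaves of a s.r.f.\ need not be closed or embedded, so the global convergence and minimisation steps cannot be justified by compactness of the leaf. I expect to circumvent this by systematically replacing global closedness with local applications of Theorem \ref{thm-s.r.f.-equifocal} at the regular limit point, combined with the constant--norm control on $\xi$ and the completeness of $M$; the other delicate point, on which the whole covering argument rests, is the verification that the reversed field $\xi'$ is genuinely a normal foliated field.
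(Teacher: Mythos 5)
Your part (1) and the regular-leaf case of part (2) follow essentially the paper's own route: reverse the horizontal geodesics to get a foliated normal field $\hat{\xi}$ along the target plaque, apply Theorem \ref{thm-s.r.f.-equifocal} to $\eta_{\hat{\xi}}$ as well as to $\eta_{\xi}$, and run an open--closed argument. You package the covering property via the curve-lifting criterion while the paper verifies even covering plaque-by-plaque ($\eta_{\xi}^{-1}(P_{z})=\bigcup_{\alpha}W_{\alpha}$ with $W_{\alpha}=\eta_{\hat{\xi}_{\alpha}}(P_{z})$ and $\eta_{\xi}:W_{\alpha}\rightarrow P_{z}$ a diffeomorphism), but this is cosmetic; and your proposed remedy for non-closed leaves --- replacing limits of points in $L_{p}$ by an application of Theorem \ref{thm-s.r.f.-equifocal} at the limit plaque, so that $\eta_{\hat{\xi}_{\alpha}}$ maps the \emph{whole} plaque into $L_{p}$ and the lift cannot escape the leaf --- is exactly the mechanism behind the paper's open--closed set $A\subset P_{z_{0}}$. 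So far, same proof in different clothing.

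The genuine gap is the singular-leaf case of part (2). You assert that ``the same openness-plus-completeness reasoning'' gives closedness of $\eta_{\xi}(L_{p})$ in a singular $L_{q}$, but the reversal trick is unavailable there: Theorem \ref{thm-s.r.f.-equifocal} requires its base to be a neighborhood in a \emph{regular} leaf, and $\eta_{\xi}$ has dropped rank at $t=1$, so there is no reversed foliated field along the singular plaque $P_{z_{0}}$ and no local inverses with which to run your lifting argument. The paper needs a different idea here: using Corollary \ref{singular-points-isolated} it backs up to $s<1$ where $y_{1}=\eta_{s\,\xi}(x_{1})$ is regular, factors $\eta_{\xi}=\eta_{\hat{\xi}}\circ\eta_{s\,\xi}$ with $\hat{\xi}$ foliated along the regular plaque $P_{y_{1}}$, observes $\eta_{\hat{\xi}}(P_{y_{1}})\subset P_{z_{0}}$, and then uses that $P_{y_{1}}$, being a regular plaque in $\tub(P_{z_{0}})$, must intersect the slice $S_{z_{0}}$, which forces $z_{0}\in\eta_{\hat{\xi}}(P_{y_{1}})\subset\eta_{\xi}(L_{p})$. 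Nothing in your sketch produces this step. A second, smaller soft spot: your surjectivity argument picks ``a minimising geodesic from $q'$ to $L_{p}$,'' which is precisely the move you yourself flagged as unjustified when $L_{p}$ is not closed as a subset --- the infimum is attained only on $\overline{L_{p}}$, whose foot point may lie on a different leaf, so the trivial-holonomy extension does not apply as stated. (To be fair, the paper's Section 4 proves only that each $\eta_{\xi}(L_{p})$ is a full leaf and is silent on realizing every leaf, so here you attempted more than the printed proof, but the step as written does not close.)
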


This paper is organized as follows. In Section \ref{sec-prop} we present the propositions need to prove the theorem. In particular we prove two propositions which contain some  improvements of  Molino's results on the local analysis of a s.r.f. More precisely, we review  a local decomposition result and a product theorem due to Molino (see Proposition \ref{lemma-almost-product} and Proposition \ref{lemma-metric-in-S}). In Section \ref{sec-teo} we prove Theorem \ref{thm-s.r.f.-equifocal} and in Section \ref{sec-cor} we prove Corollary \ref{cor}.

%%%%%%%%%%%%%%%%%%%%%%%%%%%%%%%%%%%%%%%%%%%%%%%%%%%%%%%%%%%%%%%%%%%%%%%%%%%%%%%%%%%%%%
\section{Properties of a s.r.f.} 
\label{sec-prop}

In this section we will present the propositions needed to prove  Theorem \ref{thm-s.r.f.-equifocal}.  
Throughout this section we assume that $\F$ is a s.r.f. on a complete riemannian manifold $M.$

We start by recalling the so called \emph{Homothetic Transformation Lemma} of Molino (see Lemma 6.2 \cite{Molino}).

By conjugating the homothetic transformations of the normal bundle of a plaque $P$  via the normal exponential map, one defines for small strictly positive real numbers $\lambda$, a homothetic transformation $h_{\lambda}$  with proportionality constant $\lambda$ with respect to the plaque $P.$

\begin{prop}[\cite{Molino}]
\label{homothetic-lemma}
The homothetic transformation $h_{\lambda}$ sends plaque to plaque and therefore respects the singular foliation $\F$ in the tubular neigborhood $\tub(P)$ where it is defined.

\end{prop}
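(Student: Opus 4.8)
The plan is to realize the family $\{h_\lambda\}$ as a flow and reduce the statement to an infinitesimal, variational condition. First I would fix a relatively compact plaque $P$ and choose $\eps>0$ so small that $\exp^{\perp}\colon\{v\in\nu P:\ |v|<\eps\}\to\tub(P)$ is a diffeomorphism; then $h_\lambda(\exp^{\perp}_p(v)):=\exp^{\perp}_p(\lambda v)$ is well defined and $h_{\lambda_1}\circ h_{\lambda_2}=h_{\lambda_1\lambda_2}$, so in the parameter $\tau=\log\lambda$ the maps $h_{e^\tau}$ form a local flow whose generator is the radial field $\mathcal R$, namely the push-forward under $\exp^{\perp}$ of the Euler field $v\mapsto v$ on $\nu P$; equivalently $\mathcal R(\exp^{\perp}_p(v))=\frac{d}{dt}\big|_{t=1}\exp^{\perp}_p(tv)$. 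Since each $h_\lambda$ is a diffeomorphism onto its image, it sends plaques to plaques as soon as its flow preserves the distribution tangent to $\F$. Thus it suffices to show that $\mathcal R$ is an infinitesimal automorphism of $\F$, i.e. $[\mathcal R,X]\in\singularF$ for every $X\in\singularF$, which is the same as saying that each differential $d(h_\lambda)$ maps $T\F$ to $T\F$.

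I would then recast this tangency as a statement about Jacobi fields. Consider the variation $F(q,s)=h_s(q)$. For fixed $q=\exp^{\perp}_p(v)$ the curve $s\mapsto F(q,s)=\exp^{\perp}_p(sv)$ is a geodesic issuing from $p\in P$ perpendicular to the leaf $L_0\supset P$, while for fixed $s$ the map $q\mapsto F(q,s)$ is $h_s$. Differentiating in $q$ therefore produces, along each such normal geodesic, the Jacobi field $J(s)=d(h_s)(w)$, where $w=J(1)$ is an arbitrary tangent vector to a leaf $L'$ meeting the tube; note $J(0)\in TP\subset T\F$ since all the geodesics land on $P$ at $s=0$. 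The automorphism condition is now exactly the propagation property that $J(1)\in T\F$ forces $J(s)\in T\F$ for all $s\in(0,1]$. Granting this, each slice $h_s(L')$ is everywhere tangent to $\F$ and connected, hence contained in a single leaf, and being a diffeomorphic image it is an open piece of that leaf, i.e. a plaque.

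The core step is to prove this propagation, and here the transnormal hypothesis is essential. Working first on the open dense regular stratum, where $\F$ is a regular riemannian foliation described locally by a riemannian submersion $\pi$ whose fibres are the plaques, transnormality says precisely that each geodesic $s\mapsto F(q,s)$ stays perpendicular to every fibre it meets; hence it is horizontal and $\mathcal R$ is a horizontal field tangent to them. I would differentiate the orthogonality relation $\langle\dot\gamma,X\rangle\equiv 0$, valid for all $X\in\singularF$ along such a horizontal geodesic $\gamma$, to obtain a first order relation for the normal component of $J$, and combine it with the Jacobi equation and the Riccati equation for the shape operators of the equidistant fibres. This shows that $d\pi(\mathcal R)$ is constant along fibres, i.e. $\mathcal R$ is projectable/basic for $\pi$, which is exactly the bracket condition $[\mathcal R,X]\in\singularF$ on the regular part; without transnormality the radial field of a submersion need not be basic, so this is where the hypothesis is spent.

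Finally I would pass from the regular stratum to all of $\tub(P)$: since $\mathcal R$ is smooth and $\singularF$ is a module of smooth vector fields, a bracket $[\mathcal R,X]$ tangent to the leaves on the dense regular set is tangent to $\F$ everywhere, so the automorphism property, and thus the plaque-to-plaque property of $h_\lambda$, holds on the whole tube. I expect the main obstacle to be precisely the Jacobi/propagation step across focal points and over the singular stratum, where the fibre dimension jumps, horizontal geodesics may focalize and the submersion picture degenerates; the orthogonality-plus-Jacobi computation must be organized so that it extends by continuity rather than breaking down where leaves change dimension. A secondary technical point is to verify at the outset that $\exp^{\perp}$ is a diffeomorphism on a uniform tube about the chosen plaque and that $\mathcal R$ is genuinely smooth up to $P$.
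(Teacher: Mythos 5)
Your proposal has a genuine gap, and it sits exactly where all the content of this lemma lies. The decisive flaw is the final step: ``a bracket $[\mathcal{R},X]$ tangent to the leaves on the dense regular set is tangent to $\F$ everywhere.'' For a \emph{singular} foliation, leaf-tangency is not a closed condition, because the leaf dimension drops on the singular strata: a limit of vectors tangent to nearby regular leaves lands in the limit of their tangent planes, which in general strictly contains the tangent space of the singular leaf. Smoothness of $\mathcal{R}$ and the module property of $\singularF$ do not help; as a statement about abstract singular foliations your principle is simply false (take the module generated by $y\,\partial_x$ on $\RR^2$, whose regular leaves are the horizontal lines $y\neq 0$ and whose singular leaves are the points of the $x$-axis: $\partial_x$ is smooth and tangent to every regular leaf but not to the point-leaves). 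Any repair must inject metric information at the singular points --- concretely, the local equidistance of leaves, i.e.\ that the plaques in $\tub(P)$ lie in the distance cylinders around $P$, so that $d(\cdot,P)^2$ is a smooth basic function whose level sets the flow of a would-be tangent field must preserve. You never establish or invoke equidistance; on the regular stratum the homothety statement is the easy, classical part (holonomy along horizontal geodesics of a regular riemannian foliation), so your argument proves the lemma only where it was never in doubt.

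There is also a structural problem with the ``core step.'' The propagation property you need --- a Jacobi field along a radial geodesic that is leaf-tangent at the endpoints stays leaf-tangent across the singular point $\gamma(0)\in P$ and across focal points --- is essentially the equifocality statement that is the main theorem of this very paper, and the paper proves it \emph{using} Proposition \ref{homothetic-lemma} (via Propositions \ref{lemma-metric-in-S} and \ref{prop-s.r.f-equifocal-tubular}); so filling your flagged obstacle with the available machinery would be circular, and your Riccati sketch on the regular stratum does not by itself cross the singular set. Note that the paper gives no proof of this proposition: it is quoted from Molino (Lemma 6.2 of \cite{Molino}), whose argument is purely metric and avoids Jacobi fields altogether. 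In outline: by equidistance, the flow $\varphi_t$ of $X\in\singularF$ keeps $\varphi_t(z)$ on the cylinder of radius $r=d(z,P)$, and $d(\varphi_t(z),L_{h_\lambda(z)})=(1-\lambda)r$ is constant in $t$; a nearest point $w_t\in L_{h_\lambda(z)}$ then satisfies $d(\varphi_t(z),w_t)+d(w_t,P)=r=d(\varphi_t(z),P)$, so the concatenated minimizers form a single minimal geodesic, necessarily the radial one through $\varphi_t(z)$, forcing $w_t=h_\lambda(\varphi_t(z))\in L_{h_\lambda(z)}$. That triangle-equality argument is what handles the singular points that your density step skips over.
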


The next two propositions contain some  improvements of  Molino's results (compare with Theorem 6.1 and Proposition 6.5 of \cite{Molino}).

%For the sake of completeness, we review the main ideias involved.

\begin{prop}
\label{lemma-almost-product}
\
Let $g$ be the original metric on $M$ and $q\in M.$  Then there exists a tubular neighborhood $\tub(P_{q})$ and  a new metric $\tilde{g}$ on $\tub(P_{q})$ with the following properties.
\begin{enumerate}
\item[(a)] For each $x\in \tub(P_{q})$ the normal space of the leaf $L_x$ is tangent to the slice $S_{\tilde{q}}$ which contains $x$, where $\tilde q\in P_q.$ 
\item[(b)] Let $\pi:\tub(P_{q})\rightarrow P_q$ be the orthogonal projection. Then the restriction $\pi|_{P_{x}}$ is a riemannian submersion.
\item[(c)] $\F\cap \tub(P_{q})$ is a s.r.f.
\item[(d)] $\F\cap S_{\tilde{q}}$ is a s.r.f. for each $\tilde{q}\in P_{q}.$
\item[(e)] The associated transverse metric is not changed,i.e., the distance between the plaques with respect to $g$ is the same distance between the plaques with respect to $\tilde{g}$.
\item[(f)] If a curve $\gamma$ is a geodesic orthogonal to $P_{q}$ with respect to the original metric $g$, then 
$\gamma$ is a geodesic orthogonal to $P_{q}$ with respect to the new metric $\tilde{g}$.
\end{enumerate}
\end{prop}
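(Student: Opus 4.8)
The plan is to produce $\tilde g$ as an orthogonal ``submersion'' metric adapted to the normal exponential decomposition of $\tub(P_q)$, and then to read the six properties off this decomposition, using the Homothetic Transformation Lemma (Proposition~\ref{homothetic-lemma}) to keep the foliation under control. First I would fix the tubular neighborhood by identifying $\tub(P_q)$, via the $g$-normal exponential map $\exp^{\perp}$ of $P_q$, with a disk subbundle of the normal bundle $\nu P_q$. This produces the projection $\pi\colon\tub(P_q)\to P_q$, whose fiber over $\tilde q\in P_q$ is precisely the slice $S_{\tilde q}=\exp^{\perp}(\nu_{\tilde q}P_q)$. Write $\mathcal V=\ker d\pi$ for the distribution tangent to the slices; note $\mathcal V$ contains the radial directions, so that every $g$-geodesic issuing orthogonally from $P_q$ is a radial curve $t\mapsto\exp^{\perp}(tv)$ lying inside a single slice. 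I would also record, as the input coming from Molino's local analysis and the homothety lemma, that each plaque $P_x$ projects submersively onto $P_q$, i.e. $d\pi\colon T_xP_x\to T_{\pi(x)}P_q$ is onto. The point of the construction is to pass from $g$ to a metric in which the plaque directions become orthogonal to $\mathcal V$ while the radial/transverse data of $g$ is left untouched.

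Next I would choose the horizontal complement $\mathcal H$ inside the plaques: set $\mathcal H_x$ to be the complement of $T_xP_x\cap\mathcal V_x$ in $T_xP_x$, so that $\mathcal H\subset T\F$, $\mathcal H\cap\mathcal V=\{0\}$, and (by the submersion property of the plaques just recalled) $d\pi\colon\mathcal H_x\to T_{\pi(x)}P_q$ is an isomorphism. I then define $\tilde g$ to be the orthogonal sum declaring $\mathcal H\perp\mathcal V$, equal to the pullback $\pi^{*}(g|_{P_q})$ on $\mathcal H$ and equal to $g$ on $\mathcal V$, arranging $\mathcal H$ along each normal geodesic so that the slices become $\tilde g$-totally geodesic. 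Property (b) is then immediate, since $d\pi|_{\mathcal H}$ is a $\tilde g$-isometry onto $TP_q$ and $\mathcal H_x\subset T_xP_x$, so $\pi$ and each $\pi|_{P_x}$ are riemannian submersions. Property (a) is formal: because $\mathcal H_x\subset T_xL_x$ and $\mathcal H\perp_{\tilde g}\mathcal V$, any $\tilde g$-normal vector to $L_x$ can have no nonzero $\mathcal H$-component, hence lies in $\mathcal V_x=T_xS_{\tilde q}$. For (e) and (f) I would use that $\tilde g$ agrees with $g$ on the slices: radial distances, and thus the transverse metric, are unchanged; and a radial $g$-geodesic, being an intrinsic geodesic of $(S_{\tilde q},g|_{S})=(S_{\tilde q},\tilde g|_{S})$ because its ambient acceleration vanishes, remains a $\tilde g$-geodesic once the slices are $\tilde g$-totally geodesic, and it stays $\tilde g$-orthogonal to $P_q$ since its initial velocity lies in $\mathcal V_{\tilde q}$.

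The main obstacle is establishing (c) and (d) — that $\tilde g$ is still transnormal and that $\F\cap S_{\tilde q}$ is itself a singular riemannian foliation — together with the existence of the adapted $\mathcal H$ making the slices totally geodesic, on which the clean proofs of (a), (b), (f) above silently rely. Concretely I must show that a $\tilde g$-geodesic perpendicular to one leaf stays perpendicular to every leaf it meets, a condition that has to be re-verified from scratch after the metric change. This is exactly where the radial invariance furnished by Proposition~\ref{homothetic-lemma} is essential: the homotheties $h_{\lambda}$ preserve $\F$, which forces the plaque distribution to be coherent along the normal geodesics (so that an $\mathcal H$ tangent to all plaques and compatible with totally geodesic slices exists) and makes the family $\{\F\cap S_{\tilde q}\}_{\tilde q\in P_q}$ mutually isometric along $P_q$. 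I expect the verification of transnormality for $\tilde g$ to reduce, through the splitting $\mathcal H\oplus\mathcal V$ and the riemannian submersion $\pi$, to transnormality of the slice foliation $\F\cap S_{\tilde q}$ inside $(S_{\tilde q},\tilde g|_{S})$, which one extracts from the transnormality of $\F$ in $(M,g)$ by combining the orthogonal decomposition of geodesics into horizontal and vertical parts with the homothety lemma.
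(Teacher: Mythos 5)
Your overall frame---splitting $T\,\tub(P_q)$ into a plaque-tangent horizontal part carrying $\pi^{*}(g|_{P_q})$ and a slice-tangent vertical part, declared orthogonal---is indeed the shape of the paper's construction. (There the horizontal part is the tangent bundle of a genuine smooth subfoliation $\F^2$, built by composing flows of finitely many fields $X_1,\dots,X_r\in\singularF$ spanning $T_qP_q$, whose plaques each meet each slice exactly once; this repairs your underspecified pointwise complement of $T_xP_x\cap\mathcal{V}_x$ inside the singular distribution $T\F$, which is not obviously smooth where the leaf dimension jumps, and which in any case you cannot afterwards ``arrange along each normal geodesic'' to make slices totally geodesic, since your own definition has already determined it.) The genuine gap is in the vertical metric and in the missing mechanism for (c), (d), (e). The paper does \emph{not} take $\tilde g|_{\mathcal V}=g|_{\mathcal V}$: it takes $\tilde g^{1}=(\Pi|_{T_pS})^{*}g$, where $\Pi$ is the $g$-orthogonal projection onto $D_p=\nu_p P^{2}_p$. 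This twisted choice is exactly what makes the key computation work: for a transnormal function $f$ (e.g.\ $f=d(\cdot,P_x)^2$) whose regular level sets are the $g$-distance cylinders around an \emph{arbitrary} plaque $P_x$ in the tube, one gets $\widetilde{\grad} f=\grad f+l$ with $l$ tangent to $\F^2$, hence $\tilde g(\widetilde{\grad} f,\widetilde{\grad} f)=g(\grad f,\grad f)=b\circ f$ with the \emph{same} $b$; a local version of Q-M Wang's theorem \cite{Wang} then shows the level sets are $\tilde g$-distance cylinders around $P_x$ (this is (c), and (d) follows by intersecting with a slice), and Wang's formula $d(P_x,f^{-1}(c))=\int_c^k ds/\sqrt{b(s)}$, valid for both metrics, gives (e). With your choice $g|_{\mathcal V}$ the same computation yields instead that $\widetilde{\grad} f$ is the $g$-orthogonal projection of $\grad f$ onto $\mathcal V$, whose $\tilde g$-norm is strictly smaller whenever $\grad f\notin\mathcal V$---and this does happen, because $g$-geodesics orthogonal to a plaque $P_x\neq P_q$ do \emph{not} stay inside the slices of $P_q$ (only the radial geodesics from $P_q$ do). So transnormality with respect to $\tilde g$ fails for your metric, and neither local equidistance of plaques nor preservation of transverse distances follows.

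Two further points. First, your argument for (e) proves too little even on its own terms: preserving radial distances from $P_q$ does not preserve the transverse metric, which concerns distances between \emph{all} nearby plaques; this is precisely why the paper runs the transnormal-function argument around an arbitrary $P_x$, not just around $P_q$. Your proposed reduction of (c)--(d) to ``the homothety lemma plus the riemannian submersion $\pi$'' is a hope rather than an argument: Proposition \ref{homothetic-lemma} gives radial invariance of $\F$, but it says nothing about plaques remaining equidistant after the metric is changed---that is the whole content to be proved, and Wang's theorem is the tool that proves it. Second, (f) does not require totally geodesic slices (which the paper never claims): it follows from the observation that for the distance function to $P_q$ one has $\grad f\in D_p\cap T_pS$, hence $\grad f=\widetilde{\grad} f$, combined with the fact that integral curves of the gradient of a transnormal function are geodesics up to reparametrization.
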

\begin{proof}

Let $X_1,\ldots, X_r \in  \singularF$ (i.e. vector fields that are always tangent to the leaves) 
so that $\{X_i(q)\}_{i=1,\ldots ,r}$ is a linear basis of $T_{q}P_{q}$.  Let $\varphi_{t_1}^1,\ldots, \varphi_{t_r}^r$ denote the associated one parameter groups and define $\varphi(t_1,\ldots,t_r,y):= \varphi_{t_1}^1\circ\cdots\circ\varphi_{t_r}^r$ where $y\in S_{q}$ and $(t_1,\ldots,t_r)$ belongs to a neighborhood $U$  of $0\in\mathbb{R}^r.$  Then, reducing $U$ and $\tub(P_{q})$ if necessary, one can guarantee the existence of   a regular foliation $\F^{2}$ with plaques   $P^{2}_{y}= \varphi(U,y).$ We note that the plaques $P^{2}_{z}\subset P_{z}$ and each plaque $P^{2}$ cuts each slice  at exactly one point.  Using the fact that  $\pi|_{P^{2}_{y}}: P^{2}_{y}\rightarrow P_{q}$ is a diffeomorphism, we can define a metric on each plaque $P^{2}_{y}$ as
$\tilde{g}^{2}:=(\pi|_{P^{2}_{y}})^{*}g.$

Now we want to define a metric $\tilde{g}^1$ on each slice $S\in \{S_{\tilde{q}}\}_{\tilde{q}\in P_{q}} .$ Set $D_{p}:=\nu_{p} L^{2}_{p}$ and define $\Pi:T_{p}M\rightarrow D_{p}$ as the orthogonal projection with respect to $g$. The fact that each plaque $P^{2}$ cuts each slice  at  one point implies that $\Pi|_{T_{p}S}:T_{p}S\rightarrow D_{p}$ is an isomorphism. Finally we define $\tilde{g}^{1}:= (\Pi|_{T_{p}S})^{*}g$ and $\tilde{g}:=\tilde{g}^{1}+\tilde{g}^{2}$, meaning that $\F^2$ and the slices meet orthogonally. Items (a) and (b) follow directly from the definition of $\tilde{g}.$ 

%%%%%%%%%%%%%%%%%%%%%%%%%%%%%%%%%%% NOVA PARTE
To prove Item (c) it suffices to prove that the plaques of $\F$ are locally equidistant to each other. Let $x\in S_{\tilde{q}}$, $P_{x}$ a plaque of $\F$. We know that the plaques of $\F$ are contained in the leaves of the foliation by distance-cylinders $\{C\}$ with axis $P_{x}$ with respect to $g$.
We will prove that each $C$ is also a distance-cylinder with axis $P_{x}$ with respect to the new metric $\tilde{g}.$ These facts and  the arbitrary choice of $x$  will imply that the plaques of $\F$ are locally equidistant to each other.

First we recall that a smooth function $f:M\rightarrow \mathbb{R}$ is called a \emph{transnormal function} with respect to the metric $g$ if  there exists a $C^{2}(f(M))$ function  $b$  such that $g(\grad f,\grad f)=b\circ f$. 
Let $f:\tub(P_{x})\rightarrow \mathbb{R}$ be a smooth transnormal function with respect to the metric $g$ so that each regular level set $f^{-1}(c)$ is a cylinder $C$ with axis $P_x$,  e.g. $f(y)=d(y,P_x)^2$. 
Let $\widetilde{\grad} f$ denote the gradient of $f$ with respect to the metric $\tilde{g}.$ It follows from the construction of $\tilde{g}$ that 
 \begin{equation}
 \label{eq-widetildegrad-grad}
 \widetilde{\grad} f=\grad f+l 
 \end{equation}
 where $l$ is a vector tangent to a plaque of $\F^{2}$ and in particular to a plaque of $\F$. 

Indeed, let $v\in D_p$ and $w:=(\Pi|_{T_{p}S})^{-1}(v)$. Then 
\begin{eqnarray*}
  g(\grad f,v) &=& d f (v) \\
               &=& d f (w)\\
               &=& \tilde{g}(\widetilde{\grad} f, w)\\
	          &=& \tilde{g}^{1}(\widetilde{\grad} f, w)\\
               &=& g(\Pi\widetilde{\grad} f,\Pi w)\\
               &=&g(\Pi \widetilde{\grad}f, v) 
\end{eqnarray*}
We conclude from the arbitrary choice of  $v\in D_p$, that  $\grad f=\Pi \widetilde{\grad} f,$  and hence  
$ \widetilde{\grad} f=\grad f+l$.

Equation \ref{eq-widetildegrad-grad} implies that $f$ is a also a transnormal function with respect to the metric $\tilde{g}$, i.e., 
\begin{equation}
\label{f-transnormal-g-tildeg}
\tilde{g}(\widetilde{\grad} f,\widetilde{\grad} f)=b\circ f, 
\end{equation}
Indeed, 
\begin{eqnarray*}
\tilde{g}(\widetilde{\grad} f,\widetilde{\grad} f)&=& d f(\widetilde{\grad} f)\\
                                                  &=& d f(\grad f)\\
                                                  &=& g(\grad f, \grad f)\\
                                                  &=& b\circ f
\end{eqnarray*}

Using a local version of Q-M Wang's theorem \cite{Wang}, we conclude that each regular level set of $f$ (i.e. $C$ ) is a distance cylinder around $P_{x}$ with respect to the metric $\tilde{g}$.

Now we want to prove Item (d).  Set $P_{x}^{s}=P_{x}\cap S_{\tilde{q}}$ and  $C^{s}:=C\cap S_{\tilde{q}}.$ It suffices to note that the singular foliation $\{C^s\}$ is a foliation by cylinders with axis $P_{x}^s$ with respect to the new metric $\tilde{g}.$ This follows from the fact that $\nu_{x}P_{x}\subset T_{x}S_{\tilde{q}}$ and that each geodesic orthogonal to $P_{x}$ at $x$ is contained in $S_{\tilde{q}}$ (see Item (a)).

In particular we conclude that the distance between $C$ and $P_x$ and the distance between $C^s$ and $P_{x}^{s}$ with respect to the metric $\tilde{g}$ are the same. 

To prove Item (e) we have to prove that the distance between the cylinder $C$ and the plaque $P_{x}$ is the same for both metrics.
Let $f$ be the transnormal function (with respect to $g$) defined above. 
According to Q-M Wang \cite{Wang}  for $k=f(P_x)$ and a regular value $c$  we have  
$d(P_{x},f^{-1}(c))=\int_{c}^{k}\frac{ds}{\sqrt{b(s)}}.$ Since $f$ is also a transnormal function with respect to $\tilde{g}$ (see Equation (\ref{f-transnormal-g-tildeg})), we conclude that $d(P_{x},C)=\tilde{d}(P_{x},C),$
for  $C=f^{-1}(c).$

%%%%%%%%%%%%%%%%%%%%%%
Finally we prove Item (f). We consider the transnormal function $f$ above with $x=q$. 
In this case,  
Equation (\ref{eq-widetildegrad-grad}) and the fact that $\grad f\in D_p\cap T_pS$ imply that $\grad f=\widetilde\grad f$. On the other hand, the integral curves of the gradient of a transnormal function are geodesic segments up to reparametrization (see e.g. \cite{Wang}). Therefore the radial geodesics of $P_{q}$ coincide in both metrics. This finishes the proof.

%%%%%%%%%%%%%%%%%%%%%%%%%%%%%%%%%%%%%%%%%%%%%%%%%%%%%%%%%%%%%

\end{proof}

\begin{prop}
\label{lemma-metric-in-S}
Let $\tilde{g}$ be the metric  defined in Proposition \ref{lemma-almost-product}. Then there exists  a new metric $g_0$  on $\tub(P_q)$ so that,
\begin{enumerate}
\item[(a)] Consider the tangent space $T_{\tilde{q}}S_{\tilde{q}}$ with the metric $\tilde{g}$ and $S_{\tilde{q}}$ with the metric $g_0$. Then  $\exp_{\tilde{q}}:T_{\tilde{q}}S_{\tilde{q}}\rightarrow S_{\tilde{q}}$ is an isometry.
\item[(b)] For this new metric $g_0$ we have that $\F\cap S_{\tilde{q}}$ and $\F$ restricted to $\tub(P_q)$ are also s.r.f. 
\item[(c)]  For each $x\in \tub(P_{q})$ the normal space of the leaf $L_x$ is tangent to the slice $S_{\tilde{q}}$ which contains $x$, where $\tilde q\in P_q.$
\end{enumerate}

\end{prop}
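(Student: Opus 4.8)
The plan is to obtain $g_0$ by flattening the induced metric on each slice through the exponential map, while leaving the horizontal directions and the orthogonal splitting of $\tilde{g}$ untouched. Concretely, for each $\tilde{q}\in P_q$ let $\exp_{\tilde{q}}\colon T_{\tilde{q}}S_{\tilde{q}}\to S_{\tilde{q}}$ denote the exponential map of the slice metric $\tilde{g}|_{S_{\tilde{q}}}$; shrinking $\tub(P_q)$ if necessary, this is a diffeomorphism onto $S_{\tilde{q}}$. Let $\langle\cdot,\cdot\rangle_{\tilde{q}}$ be the constant (flat) metric on the vector space $T_{\tilde{q}}S_{\tilde{q}}$ given by $\tilde{g}_{\tilde{q}}$, and set $g_0^1:=(\exp_{\tilde{q}}^{-1})^{*}\langle\cdot,\cdot\rangle_{\tilde{q}}$ on $S_{\tilde{q}}$. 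I would then define $g_0:=g_0^1+\tilde{g}^2$, i.e.\ keep the metric $\tilde{g}^2$ on the plaques of $\F^2$ and declare the slices and the $\F^2$-plaques to be $g_0$-orthogonal, exactly as in Proposition~\ref{lemma-almost-product}. Item (a) is then immediate: by construction $\exp_{\tilde{q}}$ is an isometry from $(T_{\tilde{q}}S_{\tilde{q}},\tilde{g}_{\tilde{q}})$ onto $(S_{\tilde{q}},g_0)$, so in particular each slice $(S_{\tilde{q}},g_0)$ is flat and its radial geodesics from $\tilde{q}$ are the images under $\exp_{\tilde{q}}$ of the straight rays through the origin; hence they coincide, as point sets, with the $\tilde{g}$-radial geodesics from $\tilde{q}$.

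Item (c) is obtained by repeating the computation in the proof of Proposition~\ref{lemma-almost-product}(a) for $g_0$. The slices are unchanged as sets and $g_0$ still splits orthogonally into the horizontal space $T_xP^2_x$ and the vertical space $T_xS_{\tilde{q}}$. Since $T_xP^2_x$ is the whole horizontal distribution, its $g_0$-orthogonal complement is $T_xS_{\tilde{q}}$, so $\nu_x^{g_0}L_x=T_xS_{\tilde{q}}\cap (T_xL_x^s)^{\perp}\subset T_xS_{\tilde{q}}$, where $L_x^s=L_x\cap S_{\tilde{q}}$. This proves (c).

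The heart of the matter is (b), the transnormality of $\F\cap S_{\tilde{q}}$ and of $\F$ for $g_0$, and this is where I expect the main difficulty to lie. The first key observation is that each leaf of $\F\cap S_{\tilde{q}}$ lies in a $\tilde{g}$-distance sphere centered at $\tilde{q}$: the leaf through $\tilde{q}$ is the single point $\tilde{q}=P_q\cap S_{\tilde{q}}$, so every $\tilde{g}$-radial geodesic from $\tilde{q}$ starts perpendicular to a leaf and, by the transnormality of $\F\cap S_{\tilde{q}}$ for $\tilde{g}$ established in Proposition~\ref{lemma-almost-product}(d), remains perpendicular to every leaf it meets; consequently $\tilde{d}(\tilde{q},\cdot)$ is locally constant on each leaf. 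Because $\exp_{\tilde{q}}$ is a radial isometry, the $\tilde{g}$-distance spheres about $\tilde{q}$ are exactly the $g_0$-distance spheres about $\tilde{q}$, so the leaves lie in $g_0$-distance spheres as well, and every $g_0$-radial geodesic from $\tilde{q}$ is $g_0$-perpendicular to all leaves it meets. I would then upgrade this to full transnormality for the flat metric $g_0$ by invoking the Homothetic Transformation Lemma (Proposition~\ref{homothetic-lemma}): in the coordinates $\exp_{\tilde{q}}^{-1}$ the homotheties $h_\lambda$ become the linear scalings $v\mapsto\lambda v$ of $(T_{\tilde{q}}S_{\tilde{q}},\tilde{g}_{\tilde{q}})$, so $\F\cap S_{\tilde{q}}$ is the metric cone over the singular foliation induced on a round $g_0$-sphere. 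Transnormality on flat $\RR^n$ then reduces, by the standard cone (warped-product) argument, to (i) radial geodesics being perpendicular to the leaves, which we just proved, and (ii) the foliation induced on each $g_0$-sphere being a s.r.f.\ for the round metric. For (ii) I would transport the $\tilde{g}$-transnormality of the foliation induced on the $\tilde{g}$-spheres through the radial isometry $\exp_{\tilde{q}}$, using that the $\tilde{g}$- and $g_0$-spheres coincide and that $h_\lambda$ identifies the foliations on spheres of different radii.

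Finally, the transnormality of $\F$ on all of $\tub(P_q)$ with respect to $g_0$ follows by combining the slice statement with the unchanged horizontal structure, exactly along the lines of Proposition~\ref{lemma-almost-product}(c): since $g_0=g_0^1+\tilde{g}^2$ respects the splitting into slices and $\F^2$-plaques, a local transnormal function for $\F\cap S_{\tilde{q}}$ with respect to $g_0^1$ extends to a transnormal function for $\F$ in $\tub(P_q)$, and Q--M Wang's theorem then identifies the distance cylinders around a plaque $P_x$ for $g_0$, giving local equidistance of the plaques of $\F$. The main obstacle is thus the flat-model transnormality in step (b); the decisive input that makes it tractable is that the most singular leaf in the slice is the point $\tilde{q}$, which forces all leaves into distance spheres and reduces the problem to the cone structure furnished by the Homothetic Transformation Lemma.
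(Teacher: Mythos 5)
Your construction of $g_0$ as the orthogonal sum $g_0^1+\tilde g^2$ (flat slice metric pulled back through $\exp_{\tilde q}^{-1}$, unchanged $\tilde g^2$, slices declared orthogonal to the $\F^2$-plaques) is a legitimate variant, and several pieces are sound: item (a) is immediate by construction; your argument for (c) is correct and even simpler than the paper's, since the $g_0$-orthocomplement of the full horizontal space $T_xP^2_x$ is $T_xS_{\tilde q}$; and the preliminary facts in (b) are right as well --- the leaf of $\F\cap S_{\tilde q}$ through $\tilde q$ is the point $\{\tilde q\}=P_q\cap S_{\tilde q}$, so by Proposition \ref{lemma-almost-product}(d) and the Gauss lemma the leaves lie in spheres that are simultaneously $\tilde g$- and $g_0$-distance spheres, and by Proposition \ref{homothetic-lemma} the foliation is a cone in the flat coordinates. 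The cone reduction itself (radial orthogonality plus round-sphere transnormality implies flat transnormality) is also a correct standard argument. But the decisive step (ii) --- that the induced foliation on a round $g_0$-sphere is transnormal \emph{for the round metric} --- is not proved, and your proposed justification fails twice over. First, it is nowhere established that $\F\cap S_{\tilde q}$ induces a transnormal foliation on $\tilde g$-distance spheres with respect to the induced $\tilde g$-metric: the only statement of this kind in the paper, Lemma \ref{lemma:indSRF}, is proved for the \emph{flat} metric using straight lines and $2$-planes through the origin, and it is deduced from Proposition \ref{lemma-metric-in-S}; invoking a curved-sphere version of it here inverts the paper's logical order and is essentially circular. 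Second, even granting such a statement, $\exp_{\tilde q}$ is only a radial isometry: restricted to a sphere it is a mere diffeomorphism between the induced $\tilde g$-metric and the round metric, and transnormality is not invariant under diffeomorphisms, so nothing can be ``transported.'' Thus the heart of (b) remains open in your proposal (and your closing extension from the slice to the tube, while plausible, inherits this gap).

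For comparison, the paper proves (b) without ever mentioning spheres: it interpolates $g_{\lambda}=\frac{1}{\lambda^2}h^{*}_{\lambda}\tilde g^{1}+\tilde g^{2}$, notes via Proposition \ref{homothetic-lemma} that it suffices to treat $\frac{1}{\lambda^2}\tilde g^{1}+\tilde g^{2}$, and observes that a transnormal function $f$ for $\tilde g$ whose regular levels are cylinders around a plaque $P_x$ remains transnormal for the rescaled metric because $\widetilde{\grad}f$ is tangent to the slices; Wang's local theorem then gives equidistance of the plaques for every $\lambda>0$, and the uniform convergence $d_{\lambda}\to d_{0}$ yields $d_{0}(x,P)=d_{0}(y,P)$ for $x,y$ in a common plaque, i.e.\ local equidistance for $g_0$. (Note also that the paper's $g_0$ is defined as the limit of the $g_{\lambda}$ and need not coincide with your block-diagonal metric, which is why the paper proves (c) by a limiting frame argument rather than reading it off a product structure.) To rescue your route you would have to prove round-sphere transnormality independently, which is essentially the content of (b) itself; the limit-of-metrics argument is what makes this tractable.
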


\begin{rem}
\label{rem-prop-flat-metric-in-S}
Clearly a curve $\gamma$ which is a geodesic orthogonal to $P_{q}$ with respect to the original metric, remains a  geodesic orthogonal to $P_{q}$ with respect to the new metric $g_0$.
\end{rem}
\begin{proof}

Let  $\Pi_1$ be the orthogonal projection to the slices, recall that  $\tilde{g}^{1}= \tilde{g}\circ\Pi_1$ and $\tilde{g}^{2}=\tilde{g}\circ d\,\pi$.  Let  $h_{\lambda}$ denote  the homothetic transformation with respect to $P_q$.
Define $g_{\lambda}=\frac{1}{\lambda^2}h^{*}_{\lambda} \tilde{g}^{1}+\tilde{g}^{2}.$ Note that  the metric $g_{\lambda}$ tends uniformly to  a metric $g_0$  for $\lambda\to 0$. 
This metric $g_0$ restricted to $S_{\tilde{q}}$ is 
the induced metric on $\nu P_{\tilde{q}}$, where $\tilde{q}\in P_{q}.$ 

This implies that $\mathrm{L}_{\lambda}$ tends uniformly to $\mathrm{L}_{0}$, where $\mathrm{L}_{\lambda}$ is the length function. It follows then that
\begin{equation}
\label{lemma-metric-in-S-Eq1}
\lim_{\lambda\rightarrow 0} d_{\lambda}(x,P)= d_{0}(x,P)
\end{equation}
where  $P$ is a plaque.

Now we claim that $\F$ is a s.r.f. with respect to $g_{\lambda}$. Indeed, since $h^{*}_{\lambda} \tilde{g}^{2}=\tilde{g}^{2}$ and the homothetic transformation $h_{\lambda}$ sends plaque to plaque (see Proposition \ref{homothetic-lemma}) it suffices to prove that $\F$ is a s.r.f. with respect to $\frac{1}{\lambda^2} \tilde{g}^{1}+\tilde{g}^{2}$ . 
Let $f:\tub(P_{x})\rightarrow \mathbb{R}$ be a smooth transnormal function with respect to the metric $\tilde{g}$ so that each regular level set $f^{-1}(c)$ is a cylinder with axis $P_x$. Note that $f$ is also a transnormal function with respect to the metric  $\frac{1}{\lambda^2} \tilde{g}^{1}+\tilde{g}^{2},$ because $\widetilde{\grad} f$ is tangent to the slice. 
Using a local version of Q-M Wang's theorem \cite{Wang}, we conclude that each regular level set of $f$  is a tube over $P_{x}$ with respect to the metric $\frac{1}{\lambda^2} \tilde{g}^{1}+\tilde{g}^{2}$ . Therefore the plaques are equidistant to $P_x$ and hence we conclude that $\F$ is a s.r.f. with respect to $\frac{1}{\lambda^2} \tilde{g}^{1}+\tilde{g}^{2}$ . 

Finally let $x$ and $y$ be points which belong to the same plaque. Using Equation \ref{lemma-metric-in-S-Eq1} and the fact that $\F$ is a s.r.f. with respect to $g_{\lambda}$ we conclude that 

\begin{eqnarray*}
0&=&\lim_{\lambda\rightarrow 0} (\, d_{\lambda}(x,P)-d_{\lambda}(y,P)\,)\\
          &=& d_{0}(x,P)-d_{0}(y,P) 
\end{eqnarray*}

The above equation implies  that the plaques are locally equidistant and hence that the singular foliation $\F$ is riemannian.

%%%%%%%%%%%%%%%%%%%%%%%%%%%%%%%%%%%%%%%%%%%%% 

Now we want to prove Item (c). Let $P_x$ be a plaque with $x\in S$. Note that for each metric $g_{\lambda}$ the normal space $H_{\lambda}$ of $P_x$ at $x$ (with respect to the metric $g_{\lambda}$) is tangent to $S.$ This fact will imply that the normal space of $P_{x}$ at $x$ with respect to $g_{0}$ is also tangent to $S.$ Indeed, we can find a sequence of normal spaces $H_{1/n}$ such that $H_{1/n}$ converge to a subspace $H_0$ tangent to $S$ at $x.$ Then we can find a  subsequence of frames $\{e_{i}^{n}\}$ which converge to a frame $\{e_{i}\}$ such that $\{e_{i}^{n}\}$ and $\{e_{i}\}$ are  bases of $H_{1/n}$ and $H_0$ respectively. 
Since 
\[\frac{1}{\lambda^2}h^{*}_{\lambda} \tilde{g}^{1}(d(\exp_{q})_{V}Y,d(\exp_{q})_{V}Z )=\tilde{g}^{1}(d(\exp_{q})_{\lambda V}Y,d(\exp_{q})_{\lambda V}Z),\]
we conclude that
\[g_{0}(e_{i},l)=\lim_{n\rightarrow\infty} g_{1/n}(e_{i}^{n}, l)=0\]
where $l$ is tangent to the plaque.
The last equation implies that $H_0$ is the normal space of $P_{x}$ at $x$ with respect to $g_0$.  

%%%%%%%%%%%%%%%%%%%%%%%%%%%%%%%%%%%%%%%%%%%%%%%%%%%%%%%%%%%%%%%%%%%%%%%%%%%%%%%%
\end{proof}

%%%%%%%%%%%%%%%%%%%%%%%%%%%%%%%%%%%%%%%%%%%%%%

\begin{prop}
\label{lemma-slice-fundamental}
Let $S_{q}$ be a slice at $q$ and
  $\varphi:S_{q}\rightarrow S_{q}$ be the geodesic symmetry at $q,$ i.e., $\varphi=\exp_{q}\circ(-\id)\circ\exp_{q}^{-1}$. Then 
the map $\varphi$ is $\F\cap S_{q}$ foliated, i.e. the foliation $\F\cap S_{q}$ is invariant by the involution $\varphi$.
\end{prop}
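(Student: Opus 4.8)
The plan is to transfer the whole question to the flat slice produced by Proposition \ref{lemma-metric-in-S} and then to read off the symmetry from the conical structure forced by the Homothetic Transformation Lemma. First I would replace the metric by the metric $g_0$ of Proposition \ref{lemma-metric-in-S}, so that $\exp_q\colon (T_qS_q,\tilde g)\to (S_q,g_0)$ is an isometry onto a Euclidean ball and $\F\cap S_q$ is an s.r.f.\ with respect to $g_0$; since the statement concerns only the partition $\F\cap S_q$ and not the metric, proving $\varphi$-invariance for $g_0$ suffices. In the coordinates given by $\exp_q^{-1}$ the map $\varphi$ becomes the linear reflection $-\id$ of the vector space $\nu_qP_q$, hence a $g_0$-isometry whose only fixed point is the central leaf $\{q\}$. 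By Remark \ref{rem-prop-flat-metric-in-S} the radial geodesics emanating from $q$ are straight lines through the origin and are orthogonal to $P_q$, so by transnormality they are orthogonal to every leaf of $\F\cap S_q$ they meet.

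Next I would extract the conical structure. Because the position vector is normal to the leaf through each point, the function $x\mapsto|x|$ is constant along leaves, so every leaf lies in a round sphere centred at $q$; and by Proposition \ref{homothetic-lemma} the homotheties $h_\lambda$, which in these flat coordinates are the dilations $x\mapsto\lambda x$ with $\lambda>0$, permute the leaves. Consequently the tangent distribution $D$ of $\F\cap S_q$ is invariant under all positive dilations and is constant along each radial ray. At this point the goal can be reformulated: since $-\id$ is linear and every linear subspace $V$ satisfies $-V=V$, one has $d\varphi(D_x)=D_x$ as subspaces of $\nu_qP_q$, so that $\varphi$ is foliated as soon as $D_x=D_{-x}$ for every regular $x$, equivalently as soon as the induced conical foliation is invariant under the antipodal map of the unit sphere of $\nu_qP_q$.

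The hard part is this last antipodal symmetry: it does not follow from Proposition \ref{homothetic-lemma} alone, precisely because the homothety constant $\lambda$ is restricted to be positive and $-\id=h_{-1}$ is not reachable through dilations, so the metric (equidistance) structure must be used in an essential way. To establish it I would work on a fixed round sphere $S_r$, on which $\F\cap S_q$ induces an s.r.f.\ $\mathcal{G}_r$, and exploit that its leaves are locally equidistant: choosing a transnormal function adapted to a regular leaf $\ell$ (for instance $d_0(\,\cdot\,,\ell)^2$, or the distance to the singular stratum) and invoking the local version of Q--M Wang's theorem \cite{Wang} as in Propositions \ref{lemma-almost-product} and \ref{lemma-metric-in-S}, one sees that the tubes around $\ell$ are carried by the isometry $\varphi$ to tubes around $\varphi(\ell)$, and the rigidity of equidistant tubes forces $\varphi(\ell)$ to be again a leaf of $\mathcal{G}_r$, i.e.\ $D_x=D_{-x}$. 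Granting this, $\varphi$ preserves $D$ and is an isometry, so it maps each regular leaf to an integral manifold of $D$ of the same dimension, hence onto a regular leaf; passing to closures then shows the singular leaves are permuted as well, and therefore $\F\cap S_q$ is invariant under $\varphi$.
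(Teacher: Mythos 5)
Your reduction is the same as the paper's: pass to the flat metric $g_0$ of Proposition \ref{lemma-metric-in-S}, identify $\varphi$ with $-\id$ on a Euclidean space in which $\{q\}$ is a leaf, observe that the leaves lie in round spheres and that the distribution is conical by Proposition \ref{homothetic-lemma}, and reduce everything to the antipodal invariance $D_x=D_{-x}$ on the unit sphere. You also correctly diagnose that this antipodal symmetry is the crux and cannot come from the positive dilations alone. But your proof of that crux has a genuine gap. The claim that ``the tubes around $\ell$ are carried by the isometry $\varphi$ to tubes around $\varphi(\ell)$, and the rigidity of equidistant tubes forces $\varphi(\ell)$ to be again a leaf'' is a non sequitur: \emph{any} isometry of the sphere carries $\ell$ to some submanifold and the distance tubes around $\ell$ to the distance tubes around its image, so nothing in this argument distinguishes $-\id$ from an arbitrary isometry. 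And an arbitrary isometry does not preserve a given s.r.f.\ — e.g.\ the foliation of $S^2$ by latitude circles about one axis is not preserved by a rotation about a different axis, even though that rotation maps equidistant tubes to equidistant tubes. Unless you first relate $\varphi(\ell)$ to the foliation, the tube structure around $\varphi(\ell)$ carries no information about $\mathcal{G}_r$, so the step is essentially circular.

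What is actually needed, and what the paper supplies, is a geometric property special to the antipodal map: focusing of spherical geodesics. The paper first proves (Lemma \ref{lemma:indSRF}) that the induced foliation on $S^{n-1}$ is itself transnormal — this is not automatic and is established by a $2$-plane argument: the plane spanned by the radial directions at $v$ and at $w=\gamma_\xi(t)$ contains the Euclidean chord from $v$ to $w$, which is normal to $L_w$ by transnormality of $\F$ in $\RR^n$, and this plane meets the sphere exactly in $\gamma_\xi$. (Note you assert the existence of the s.r.f.\ $\mathcal{G}_r$ on $S_r$ without proof; this is precisely that lemma.) Then, since every spherical geodesic $\gamma_\xi$ with $\xi\in\nu^1_vL_v$ satisfies $\gamma_\xi(\pi)=-v$ and $\dot\gamma_\xi(\pi)=-\xi$, transnormality of the spherical foliation gives $-\xi\in\nu_{-v}L_{-v}$, hence $\nu_vL_v\subset\nu_{-v}L_{-v}$, with equality by symmetry; this is exactly your missing identity $D_x=D_{-x}$. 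To repair your write-up, replace the tube-rigidity step by this antipodal-focusing argument (and prove the transnormality of the induced spherical foliation rather than assuming it); the rest of your reduction then goes through as in the paper.
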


\begin{proof}

It follows from Proposition \ref{lemma-metric-in-S} and Remark \ref{rem-prop-flat-metric-in-S} that  we can lift $\F$ via the exponential map in a neighborhood of $q$ to a s.r.f. of $T_{q}S_{q}$. Therefore we assume that $\F$ is a s.r.f. of $\RR^n$ with euclidean metric which has $\{0\}$ as a leaf.

\begin{lemma}\label{lemma:indSRF}
The induced singular foliation on the unit sphere $\F':=\F|S^{n-1}$ is a s.r.f.
\end{lemma}
\begin{proof}
First note that every leaf of $\F$ that has a point in $S^{n-1}$ lies in $S^{n-1}$. Clearly $\F'$ is a singular foliation. We now want to show transnormality. Let $v\in S^{n-1}$ and $\xi\in\nu_vL_v\cap T_vS^{n-1}$ a unit vector. We denote by $\gamma_\xi$ the geodesic in $S^{n-1}$ with initial vector $\xi$. We want to show that $\xi(t):=\dot\gamma_\xi(t)\in \nu_{w}L_w$, where $w=\gamma_\xi(t)$. First we assume $t\in (0,\pi)$. Then the two unit radial vectors of $S^{n-1}$ in $v$ and $w$ span a 2-plane of $\RR^n$ containing the origin. As it contains the straight line from $v$ to $w$, it lies in $\nu_wL_w$ by transnormality of $\F$. The intersection of this 2-plane with $S^{n-1}$ is exactly the geodesic $\gamma_\xi$. Therefore $\xi(t)\in \nu_wL_w$. This shows that $\gamma_\xi|[0,t)$ and consequently $\gamma_\xi|[0,\pi)$ is transnormal. To prove transnormality of $\gamma_\xi|[0,2\pi)$ we repeat the argument with $w$ respectively $\xi(t)$ as our new $v$ respectively $\xi$. Since the geodesic $\gamma_\xi$ is closed of period $2\pi$ only a third step is needed to show its transnormality.
\end{proof}

Now let $v\in S^{n-1}$ and let $L_v$ be leaf through $v$. Here we denote by $\nu_vL_v$ the normal space of $L_v$ in $S^{n-1}$. For any $\xi\in\nu^1_vL_v$ the geodesic $\gamma_\xi$ in $S^{n-1}$ meets the leaf $L_{-v}$ in the antipodal point $-v$ orthogonally, i.e. $-\xi=\dot\gamma_\xi(\pi)\in\nu_{-v}L_{-v}$. So as vector spaces in $\RR^n$ we have $\nu_{v}L_v\subset \nu_{-v}L_{-v}$ and by symmetry we have equality for every $v\in S^{n-1}$. In other words $-\id$ respects the normal bundle and therefore also the tangent bundle of $\F$. From this we conclude that $-\id$ respects $\F$ on $S^{n-1}$.
\end{proof}

\begin{cor}
\label{singular-points-isolated}
Let $\gamma$ be a geodesic orthogonal to a regular leaf of a s.r.f. Then the singular points are isolated on $\gamma.$ 
\end{cor}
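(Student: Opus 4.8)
The plan is to prove that the regular/singular status of $\gamma(t)$ is locally constant and, crucially, \emph{symmetric about every point}, and then to promote this to global constancy off a discrete set by a connectedness argument. Fix $t_0$, write $q=\gamma(t_0)$, and observe that by transnormality $\gamma$ is orthogonal to $L_q$, so $\xi:=\dot\gamma(t_0)\in\nu_q P_q$ and, for small $t$, $\gamma(t_0+t)=\exp_q(t\xi)$ is a radial geodesic from the plaque $P_q$ lying in the slice $S_q$.

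First I would establish constancy on each side of $t_0$ via the homothetic transformations. Since $h_\lambda$ acts on the normal exponential image by $h_\lambda(\exp_q(v))=\exp_q(\lambda v)$, we get $h_\lambda(\gamma(t_0+t))=\gamma(t_0+\lambda t)$; as $h_\lambda$ respects $\F$ (Proposition \ref{homothetic-lemma}), it carries $L_{\gamma(t_0+t)}$ diffeomorphically onto $L_{\gamma(t_0+\lambda t)}$. Hence $\dim L_{\gamma(t_0+t)}$ is independent of $t$ on each punctured half-interval, so $\gamma(t)$ has constant status on each side of $t_0$. Here I use that, by the local product structure of Proposition \ref{lemma-almost-product}, the leaf dimension exceeds the slice-leaf dimension by the constant $\dim P_q$, so regularity for $\F$ and for $\F\cap S_q$ coincide.

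The heart of the argument is to glue the two sides. The geodesic symmetry $\varphi=\exp_q\circ(-\id)\circ\exp_q^{-1}$ sends $\gamma(t_0+t)=\exp_q(t\xi)$ to $\gamma(t_0-t)$ and, by Proposition \ref{lemma-slice-fundamental}, preserves $\F\cap S_q$; it thus maps the slice-leaf through $\gamma(t_0+t)$ onto the one through $\gamma(t_0-t)$, so the two half-intervals carry the same dimension. Consequently the punctured neighborhood $(t_0-\epsilon,t_0+\epsilon)\setminus\{t_0\}$ is \emph{entirely regular} or \emph{entirely singular}. I expect this symmetric dichotomy to be the main obstacle and the essential content of the corollary: it is exactly the invariance under $-\id$ from Proposition \ref{lemma-slice-fundamental} that prevents $\gamma$ from remaining in a lower stratum along a one-sided half-open interval, a configuration one could not otherwise exclude.

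Finally I would conclude by connectedness. Let $U$ be the set of parameters whose punctured neighborhood is entirely regular, and $V$ the set whose punctured neighborhood is entirely singular. By the previous steps $U$ and $V$ are disjoint and cover the connected domain of $\gamma$, and both are open: near a point of $U$ the neighboring parameters are regular and hence lie in $U$, since the regular set is open, while near a point of $V$ an entire interval is singular. As $\gamma$ meets a regular leaf by hypothesis we have $U\neq\emptyset$, forcing $V=\emptyset$. Therefore every parameter lies in $U$, which is to say the singular points of $\gamma$ are isolated.
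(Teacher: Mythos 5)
Your proof is correct and is essentially the argument the paper intends: the corollary is stated without proof precisely because it follows from the Homothetic Transformation Lemma (one-sided constancy of the leaf dimension along radial geodesics) combined with the geodesic symmetry of Proposition \ref{lemma-slice-fundamental} (matching the two sides), and your connectedness argument propagating regularity from the regular leaf is the natural way to make this implicit deduction explicit. Your use of Proposition \ref{lemma-almost-product} to identify regularity for $\F$ with regularity for $\F\cap S_q$ is exactly the right bookkeeping step.
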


%%%%%%%%%%%%%%%%%%%%%%%%%%%%%%%%%%%%%%%%%%%%%%%%%%%%%%%%%%%%%%%%%%%%%%%%%%%%%%%%
%%%%%%%%%%%%%%%%%%%%%%%%%%%%%%%%%%%%%%%%%%%  THEOREM %%%%%%%%%%%%%%%%%%%%%
\section{Proof of the theorem}
\label{sec-teo}

In this section we will apply the above propositions to prove the theorem. We start by proving a local version of Theorem \ref{thm-s.r.f.-equifocal}.

\begin{prop}
\label{prop-s.r.f-equifocal-tubular}
Let $\tub(P_{q})$ be a tubular neighborhood of a plaque $P_{q},$
$x_{0}\in \tub(P_{q}),$  a regular point  and  $\xi\in \nu P_{x_{0}}$ such that  $\exp_{x_{0}}(\xi)=q.$  Then we can find a neighborhood $U$ of $x_{0}$ in $P_{x_{0}}$ with the following properties:
\begin{enumerate}
\item[1)] We can extend $\xi$ to a foliated normal vector field $\xi$ on $U.$ 
 \item[2)] The geodesic segment that is orthogonal to $P_{q}$ and contains  a point $x\in U$ is $\gamma_{x}(t):=\exp_{x}((t+1)\,\xi)$ where $t\in[-1,1].$
\item[3)] $\eta_{(t+1)\,\xi}(U)$ is an open subset of $L_{\gamma_{x_{0}}(t)}.$ 
\item[4)] $\eta_{t\,\xi}:U\rightarrow \eta_{t\,\xi}(U)$ is a  diffeomorphism  for $t\neq 1.$
\item[5)] $\dim \rank D \eta_{\xi}$ is constant on $U.$ 
\end{enumerate}
\end{prop}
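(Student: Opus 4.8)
The plan is to identify every end-point map $\eta_{t\xi}$ with a homothetic transformation of Proposition~\ref{homothetic-lemma} and then read off (1)--(5) from the properties of these transformations. First I would pass to a convenient metric. By Propositions~\ref{lemma-almost-product} and~\ref{lemma-metric-in-S} together with Remark~\ref{rem-prop-flat-metric-in-S}, I may replace $g$ by the metric $g_0$ on $\tub(P_q)$: this leaves $\F$ and all normal geodesics of $P_q$ unchanged, splits $\tub(P_q)$ orthogonally into plaque directions and slices, and makes each slice $S_{\tilde q}$ flat, with $\exp_{\tilde q}$ an isometry and $\tilde q$ a point-leaf of $\F\cap S_{\tilde q}$. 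Since $\exp_{x_0}(\xi)=q$ with $\xi\perp P_{x_0}$, transnormality forces the geodesic $s\mapsto\exp_{x_0}(s\xi)$ to meet $P_q$ orthogonally, so it is a radial geodesic of $P_q$ and $x_0$ lies in the slice $S_q$. Thus I can carry out the computation in this flat product model, where $\exp_x(v)=x+v$ inside each slice and $\pi(x)$ is the centre of the slice through $x$.

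Next I define and analyse $\xi$. I extend $\xi$ over a neighborhood $U$ of $x_0$ in $P_{x_0}$ by letting $\xi_x$ be the vector pointing from $x$ to its foot point $\pi(x)\in P_q$, i.e.\ $\xi_x=\pi(x)-x$ in the flat slice; this agrees with the given $\xi$ at $x_0$ because $\pi(x_0)=q$. The flow of this field is, up to reparametrisation, the family of homothetic transformations centred along $P_q$, which send plaque to plaque by Proposition~\ref{homothetic-lemma}; hence the flow preserves $\F$ and $[\xi,Y]\in\singularF$ for all $Y\in\singularF$, giving~(1). Perpendicularity of the radial geodesics to every leaf they meet (transnormality) shows that $\xi$ is $g$-normal to each $P_x$ and yields~(2) with $\gamma_x(t)=\exp_x((t+1)\xi)$, where $\gamma_x(-1)=x$, $\gamma_x(0)=\pi(x)\in P_q$, and $\gamma_x(1)$ is the reflection of $x$ in its slice.

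The key computation is then $\eta_{t\xi}(x)=\exp_x(t\xi_x)=(1-t)x+t\,\pi(x)=h_{1-t}(x)$, valid in each slice, so $\eta_{t\xi}=h_{1-t}|_U$. For $t\in[0,1)$ the ratio $1-t$ is positive and Proposition~\ref{homothetic-lemma} gives directly that $h_{1-t}$ is an $\F$-preserving diffeomorphism; for $t\in(1,2]$ I write $h_{1-t}=h_{t-1}\circ\varphi$ and use Proposition~\ref{lemma-slice-fundamental} that the geodesic symmetry $\varphi$ preserves $\F\cap S_{\tilde q}$. In either case $\eta_{t\xi}\colon U\to\eta_{t\xi}(U)$ is a diffeomorphism onto an open subset of the leaf $L_{\gamma_{x_0}(t-1)}$, which is regular once $U$ is shrunk so that $t=1$ is the only singular parameter (possible by Corollary~\ref{singular-points-isolated}); this proves~(4) and~(3) for $t\neq 0$. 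At $t=1$ the ratio vanishes and $\eta_\xi=\pi|_U$ collapses each slice-leaf onto its centre, so $\eta_\xi(U)=\pi(U)$ is open in $L_q$ and $\rank D_x\eta_\xi=\dim P_q=\dim L_q$ for every $x\in U$; this value is constant, giving~(5) and~(3) for $t=0$.

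The main obstacle is not the rank count, which is immediate once $\eta_{t\xi}$ is recognised as $h_{1-t}$, but the legitimacy of that identification. I must check that the Bott-parallel (foliated) extension of $\xi$ really is the radial field generating the homotheties, and that the passage to $g_0$ disturbs neither $\F$ nor the radial geodesics of $P_q$, so that a computation made in the flat model remains valid for the original $g$. These are exactly the outputs of Propositions~\ref{lemma-almost-product}, \ref{lemma-metric-in-S} and~\ref{lemma-slice-fundamental}; the one metric-sensitive point to keep in view is that both ``$\xi\perp_g P_x$'' and ``$\exp_x(\xi_x)=\pi(x)$'' are assertions about the common family of radial geodesics and are therefore insensitive to the change of metric.
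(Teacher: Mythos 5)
Your identification of the end-point maps with Molino's homothetic transformations is correct and genuinely useful on the segment \emph{before and up to} the central plaque: since the Bott-parallel extension of $\xi$ along $U$ is unique and the radial field (whose flow is $h_{e^{-s}}$, plaque-preserving by Proposition \ref{homothetic-lemma}) is normal by transnormality and foliated, the extension in item 1) \emph{is} the radial field, so $\eta_{t\xi}=h_{1-t}|_U$ for $t\in[0,1)$ and $\eta_{\xi}=\pi|_U$. With Proposition \ref{lemma-almost-product}(b) ($\pi|_{P_x}$ a submersion) this gives items 1), 2), 5) and the parts of 3), 4) with parameter in $[0,1]$ more directly than the paper, which derives them from a Jacobi-field computation; note also that for this range you never need the metric $g_0$ at all, since $\exp_x(t\xi_x)=h_{1-t}(x)$ holds verbatim for the original $g$.

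The genuine gap is the continuation \emph{past} $P_q$, i.e.\ $t\in(1,2]$, which is precisely the equifocality crux and what the composition argument in the proof of Theorem \ref{thm-s.r.f.-equifocal} requires. You write $\eta_{t\xi}=h_{t-1}\circ\varphi$ and invoke Proposition \ref{lemma-slice-fundamental}, but that proposition is a statement about \emph{one slice at a time}: each geodesic symmetry $\varphi_{\tilde q}$ preserves the foliation $\F\cap S_{\tilde q}$ of its own slice. Your map is the global slice-wise reflection $\Psi$ of the tube, $\Psi(x)=\varphi_{S_{\pi(x)}}(x)$, and nothing you cite shows that $\Psi$ preserves $\F$: for $x,x'\in U$ with $\pi(x)\neq\pi(x')$, the per-slice result places $\Psi(h_{t-1}(x))$ and $\Psi(h_{t-1}(x'))$ in leaves of two \emph{different} slice foliations, with no coherence statement that these are the same leaf of $\F$. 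Equivalently, $d\Psi$ preserves $T\F$ in the slice directions by Proposition \ref{lemma-slice-fundamental}, but its behavior on the leaf-horizontal directions (the variation of the reflection as the slice center $\tilde q$ moves along $P_q$) is exactly what must be controlled when crossing the singular plaque. This is the content the paper's proof supplies with the submanifold $\sigma=\exp_q(\text{subspace})$: it shows $\sigma$ is totally geodesic at the points $\gamma(t)$, that $(T_{\gamma(t)}\sigma)^{\perp}$ is a parallel family invariant under $R(\gamma',\cdot)\gamma'$, and then propagates the $L$-Jacobi field $J$ through the singular time by the Jacobi ODE, concluding $J(t)\in(T_{\gamma(t)}\sigma)^{\perp}=T_{\gamma(t)}P_{\gamma(t)}$ for all $t$ — which is the across-slice coherence your argument assumes. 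Without some substitute for this step, items 3) and 4) beyond the singular parameter are unproven. A small secondary point: shrinking $U$ cannot make ``$t=1$ the only singular parameter,'' since $U$ lies in the plaque and does not alter $\gamma_{x_0}$; the correct argument, as in the paper, combines Corollary \ref{singular-points-isolated} with the homothetic transformations (if $\gamma_{x_0}(t_1)$ were singular for some $t_1\neq 0$, applying $h_\lambda$ would produce non-isolated singular points on $\gamma_{x_0}$).
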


\begin{proof}

The proof of 1) is straightforward. The proof of 2) follows from the Homothetic Transformation Lemma by Molino (Proposition \ref{homothetic-lemma}).

Using Proposition \ref{homothetic-lemma} and Proposition \ref{lemma-slice-fundamental} we can conclude the following lemma.
\begin{lemma}
\label{lemma-jacobi-field-itens}
Let  $\alpha(s)$ be a curve in $U.$ Define $f(s,t)=\exp_{\alpha(s)}(t\xi)$ and $J(t)=\frac{\partial f}{\partial s}(0,t).$ Then to prove item 3), 4) and 5) it suffices to prove that the Jacobi field $J$ is always tangent to the leaves.
\end{lemma}

In what follows we will prove that the Jacobi field $J$ defined above is always tangent to the leaves.

Let $g_{0}$ be the metric defined in Proposition \ref{lemma-metric-in-S}. 
Then Remark \ref{rem-prop-flat-metric-in-S} and Item 2)  
 imply that the Jacobi field $J$ defined in Lemma \ref{lemma-jacobi-field-itens} has not been changed when the metric was modified. 

Now consider a geodesic segment $\gamma$ orthogonal to the leaves of $\F$ so that $\gamma(0)=q$ and $\gamma(1)$ is a regular point contained in $S_q$ . It follows from 
Corollary \ref{singular-points-isolated} that $\gamma(t)$ is always regular for $ -1\leq t<0$ and $0<t\leq 1.$
 
We define $\sigma$ as the submanifold contained in $S_{q}$ which is the image by $\exp_q$ of a subspace and so that $\sigma$ is orthogonal to $L_x$ at $x.$  

By Proposition \ref{lemma-slice-fundamental},  Proposition \ref{lemma-metric-in-S} and Proposition \ref{homothetic-lemma}  we have that the plaques $P_{\gamma(t)}\cap S_{q}$ are orthogonal to $\sigma$ for $-1\leq t\leq 1$. Then it follows from Proposition \ref{lemma-metric-in-S} that the plaques $P_{\gamma(t)}$ are orthogonal to $\sigma$ for $-1\leq t\leq 1.$

Consider a geodesic segment $\beta$ so that $\beta(0)=\gamma(t)$ and $\beta$ is orthogonal to $P_{\gamma(t)}$. Then  Proposition \ref{lemma-metric-in-S} imply that $\beta$ is contained in $S_{q}.$ Since $S_q$ is identified with $T_{q}S_{q}$  we can consider $\beta$ as a straight line. Since $P_{\gamma(t)}\cap S_{q}$ is orthogonal to $\sigma$, and $\sigma$ is identified with a subspace, we conclude that $\beta$ is contained in $\sigma$.

Therefore $\exp_{\gamma(t)}(\nu(P)_{\gamma(t)}\cap B_{\epsilon}(0))$  is an open set of $\sigma.$ A standard argument from riemannian geometry implies that the second form is null at $\gamma(t)$, i.e., $\sigma$ is geodesic at $\gamma(t).$ In particular the curvature tensor $R$ of $\sigma$ is the same as the ambient space at $\gamma(t).$ This fact and the fact that $R(\gamma^{'},\cdot)\gamma^{'}$ is self-adjoint imply that 
$T_{\gamma(t)}\sigma$ as well $(T_{\gamma(t)}\sigma)^{\perp}$ are families of parallel subspace along $\gamma$ which are invariant by $R(\gamma^{'},\cdot)\gamma^{'}.$

Finally consider the $L_x$ -Jacobi field $J$ defined in Lemma \ref{lemma-jacobi-field-itens}. This Jacobi field has initial conditions at  $(T_{\gamma(1)}\sigma)^{\perp}$ and satisfies
the Jacobi equation. So $J(t)\in (T_{\gamma(t)}\sigma)^{\perp}$ for $-1\leq t\leq 1.$

As remarked above plaques $P_{\gamma(t)}$ are orthogonal to $\sigma$ for $-1\leq t\leq 1.$ Since $P_{\gamma(t)}$ are regular plaques for $t\neq0$ (see Corollary  \ref{singular-points-isolated}) we conclude that $J(t)$ is always tangent to $P_{\gamma(t)}.$

\end{proof}

We are finally ready to prove   Theorem \ref{thm-s.r.f.-equifocal}. 

Let $L$ be a leaf of $\F,$  and $\xi$ be a  normal foliated vector field along a neighborhood $U$ of $L.$ 
Let $p\in U.$ Since singular points are isolated along 
   $\gamma_{p}(t)=\exp_{p}(t\,\xi)|_{[-\epsilon,1+\epsilon]}$, there exists a partition $0=t_{0}<\cdots<t_{n}=1$ such that $\gamma(t_{i})$ are the only possible singular points.  
   
%we can cover this arc of geodesic with a finite number of tubular neighborhood  $\tub(P_{\gamma_{p}(t_{i})})$ , where $t_{0}=0$ and  $t_{n}=1.$   

Let  $P_{\gamma_{p}(r_{i})}$ be regular plaques that belong to  $\tub(P_{\gamma_{p}(t_{i-1})})\cap \tub(P_{\gamma_{p}(t_{i})}),$ where $t_{i-1}<r_{i}<t_{i}.$ Applying  Proposition  \ref{prop-s.r.f-equifocal-tubular}  we can find  an open set $U_{0}\subset P_{p},$ of the plaque $P_{p},$   an open set $U_{n+1}$ of $P_{\gamma_{p}(1)},$ open sets $U_{i}\subset P_{\gamma_{p}(r_{i})}$ of the plaques $P_{\gamma_{p}(r_{i})}$ (for $1\leq i\leq n)$ and   normal foliated vector fields  $\xi_{i}$ along  $U_{i},$ (for $0\leq i\leq n$) with the following properties: 
\begin{enumerate}
\item[1)] For each  $U_{i},$ the  normal foliated vector field  $\xi_{i}$  is tangent to the geodesics $\gamma_{x}(t),$ where $x\in U_{0};$
\item[2)] $ \eta_{\xi_{i}}:U_{i}\rightarrow U_{i+1}$ is surjetive and for $i< n$ a   diffeomorphism.
\item[3)] $\eta_{\xi}|_{U_{0}}=\eta_{\xi_{n}}\circ\eta_{\xi_{n-1}}\circ\cdots\circ\eta_{\xi_{0}}|_{U_{0}}$ 
\end{enumerate}
   
Because $\dim\rank d\eta_{\xi_{i}}$ is constant on $U_{i}$, it follows that   $\dim d\eta_{\xi}$ is constant on $U_{0}.$ Since this hold for each $p\in U,$ 
$\dim d\eta_{\xi}$ is constant on $U.$ It also follows that 
$\eta_{\xi}(U)$ is an open set of $L_{\eta_{\xi}(U)}$.

%%%%%%%%%%%%%%%%%%%%%%%%%%%%%%%%%%%%%%%%%%
%%%%%%%%%%%%%%%%%%%%%%%%%%%%%%%% COROLLARY

\section{Proof of Corollary \ref{cor}}
\label{sec-cor}

Let $L_{p}$ be a regular  leaf with trivial holonomy and $\xi$ a   normal foliated vector  fields along $L_{p}$. 
It follows from Theorem \ref{thm-s.r.f.-equifocal} that $\eta_{\xi}(L_{p})$ is an open set of $L_{q},$ where $q=\eta_{\xi}(p).$ 
In this section we  will prove that $\eta_{\xi}(L_{p})$ is also a closed set in $L_{q}$ and hence that $\eta_{\xi}(L_{p})=L_{q}.$ In addition, when $q$ is a regular point, we will also prove that 
 $\eta_{\xi}:L_{p}\rightarrow L_{q}$ is a covering map.

At first suppose  that $L_{q}$ is a regular leaf. 

For a point $z_{0}\in L_{q}$ assume that there exists a point $z_1\in\eta_{\xi}(L_{p})$ which also belongs to the plaque $P_{z_{0}}.$ 
Let $x_{\alpha}$ be a point in  $L_{p}$ such that $\eta_{\xi}(x_{\alpha})=z_{1}.$   
Let $\hat{\xi}_{\alpha}$ be the vector in $T_{z_{1}}M$ tangent to the geodesic $\exp_{x_{\alpha}}(t\,\xi)$ so that $\exp_{z_{1}}(\hat{\xi}_{\alpha})=x_{\alpha}.$
We can extend $\hat{\xi}_{\alpha}$ along  the plaque $P_{z_{0}}.$ Theorem \ref{thm-s.r.f.-equifocal} implies that $\eta_{\hat{\xi}_{\alpha}}:P_{z_{0}}\rightarrow L_{p}.$ 
Let $A$ be the set of points $ z \in P_{z_{0}}$ such that  $\hat{\xi}_{\alpha}(z)$ is  tangent to the geodesic $\exp_{x}(t\,\xi)$ and $\exp_{z}(\hat{\xi}_{\alpha})=x$ for $x\in L_{p}$.
The fact $\eta_{\xi}:L_{p}\rightarrow L_{q}$ is a local diffeomorphism (see  Theorem  \ref{thm-s.r.f.-equifocal}) implies that $A$ is an open set of $P_{z_{0}}.$ On the other hand, the fact that $\eta_{\hat{\xi}_{\alpha}}:P_{z_{0}}\rightarrow L_{p}$ is a local diffeomorphism  implies that $A$ is a closed set of $P_{z_{0}}.$ Therefore $A=P_{z_{0}}.$  This means that $z_{0}\in \eta_{\xi}(L_{p})$ and hence that $\eta_{\xi}(L_{p})$ is  a closed set in $L_{q}.$

Now we want to prove that $\eta_{\xi}:L_{p}\rightarrow L_{q}$ is a covering map, for a regular point $q$. 
For a plaque $P_{z}$ consider all  points $x_{\alpha}\in L$ so that $\eta_{\xi}(x_{\alpha})=z.$ 
For each $x_{\alpha}$ let
 $\hat{\xi}_{\alpha}$ be the vector in $T_{z}M$ tangent to the geodesic $\exp_{x_{\alpha}}(t\,\xi)$ so that $\exp_{z}(\hat{\xi}_{\alpha})=x_{\alpha}.$
As proved above, we can extend each  vector $\hat{\xi}_{\alpha}$ to a vector field along the plaque $P_{z}$ and we can show that the map $\eta_{\xi}:W_{\alpha}\rightarrow P_{z}$ is a diffeomorphism, where $W_{\alpha}=\eta_{\hat{\xi}_{\alpha}}(P_{z})$. Note that $\eta_{\xi}^{-1}(P_{z})=\cup_{\alpha} W_{\alpha}.$ We conclude   that $\eta_{\xi}:L_{p}\rightarrow L_{q}$ is a covering map.

At last, suppose that $L_{q}$ is a singular leaf. 

For a point $z_{0}\in L_{q}$ assume that there exists a point $z_1\in\eta_{\xi}(L_{p})$ which also belongs to the plaque $P_{z_{0}}.$ 
There exists $x_{1}\in L_{p}$ such that $z_{1}=\eta_{\xi}(x_{1})\in P_{z_{0}}.$ We can find a $s<1$ such that $y_{1}=\eta_{s\,\xi}(x_{1})$ is a regular point. Since $y_{1}$ is a regular point, the plaque $P_{y_{1}}$ is an open set of $\eta_{s\,\xi}(L_{p})$. There exists a parallel normal field $\hat{\xi}$ along $P_{y_{1}}$ such that $\eta_{\hat{\xi}}\circ\eta_{s\,\xi}=\eta_{\xi}.$ 
 It follows  that $\eta_{\hat{\xi}}(P_{y_{1}})\subset P_{z_{0}}.$ On the other hand, since the foliation is singular,  the plaque $P_{y_{1}}$ intersect the slice $S_{z_{0}}.$ These two facts  imply that $z_{0}\in  \eta_{\hat{\xi}}(P_{y_{1}}).$ Therefore $z_{0}\in \eta_{\xi}(L_{p})$ and hence $\eta_{\xi}(L_{p})$ is  a closed set in $L_{q}.$

%%%%%%%%%%%%%%%%%%%%%%%%%%%% BIBLIOGRAPHY
%%%%%%%%%%%%%%%%%%%%%%%%%%%%%%%%%%%%%%%%%%%%%%%%
\bibliographystyle{amsplain}

\end{document}